\let\ORIlabel\label
\let\ORIrefstepcounter\refstepcounter
\AddToHook{package/hyperref/before}{
   \let\label\ORIlabel
   \let\refstepcounter\ORIrefstepcounter}
\documentclass[hidelinks,onefignum,onetabnum]{siamart220329}

\usepackage{algpseudocodex}
\usepackage{color}
\usepackage{amssymb}
\usepackage{mathtools}
\usepackage{accents}

\ifpdf
  \DeclareGraphicsExtensions{.eps,.pdf,.png,.jpg}
\else
  \DeclareGraphicsExtensions{.eps}
\fi

\newlength{\dhatheight}

\newsiamremark{remark}{Remark}
\newsiamremark{hypothesis}{Hypothesis}
\crefname{hypothesis}{Hypothesis}{Hypotheses}
\newsiamthm{claim}{Claim}
\newsiamthm{lem}{Lemma}

\headers{General Superconvergence for Cubature on Polygonal Domains}{J. A. Reeger}

\title{A General Superconvergence Result for Cubature on Triangulated Polygonal Domains\thanks{Submitted to the editors DATE.
\funding{This work was funded by the Joint Directed Energy Transition Office project Modeling and Simulation of Laser Propagation in Reactive Media and Air Force Office of Scientific Research project Kernel Methods with Machine Learning and Adaptivity.}}}

\author{Jonah A. Reeger\thanks{Department of Mathematics and Statistics, Air Force Institute of Technology, Wright-Patterson Air Force Base, OH
  (\email{jonah.reeger@afit.edu}). }}

\usepackage{amsopn}

\ifpdf
\hypersetup{
  pdftitle={An Example Article},
  pdfauthor={J. A. Reeger}
}
\fi

\begin{document}

\maketitle

% REQUIRED
\begin{abstract}
Cubature rules, which approximate definite integrals as a linear combination of a set of function values, are ubiquitous and necessary for computational methods in the physical sciences.  A superconvergence result for cubature rules on polygonal domains is developed, whereby a rule that is exact for all bivariate polynomials of a fixed even degree realize an extra order of convergence under a decrease in the spacing between nodes.
\end{abstract}

% REQUIRED
\begin{keywords}
Quadrature, Cubature, Triangle
\end{keywords}

% REQUIRED
\begin{MSCcodes}
68M25, 65R99
\end{MSCcodes}

\section{Introduction}

Inspired by observations in \cite{JAR2026}, this article presents a superconvergence result for cubature rules developed for approximating definite integrals on a polygonal domain, $\Omega\subset\mathbb{R}^{2}$, when the rules are exact for all bivariate polynomials up to a fixed degree.  It was observed that such rules realize an extra order of convergence under refinement of the spacing between nodes, even when the node set does not feature any uniformity or prescribed configuration.  This result is a generalization of that presented in \cite{SymmetricTriangleQuadrature} and dispenses with the requirement that the set of cubature nodes and weights satisfy certain symmetries.

Cubature rules, which approximate definite integrals as a linear combination of a set of function values, are ubiquitous and necessary for computational methods in the physical science, e.g., the method of moments and finite-element method, that require the integration of an arbitrary function over a given domain \cite{TriangleQuadratureReview}.  To realize computational efficiency, the domain of integration is often subdivided into a set of subdomains, frequently triangular, and approximations of definite integrals over subdomains are summed together.  A recent survey of cubature rules for triangles does not appear to be available; however, many of the typical approaches for constructing sets of weights and nodes appear in   \cite{LynessCools1994,Cools1997,TriangleQuadratureReview}.  Contemporary works have been more specific in their focus, leveraging symmetry or orthogonal polynomials, for instance, in their construction \cite{LiuLiu2024,WorkuHickenZingg2026,SymmetricTriangleQuadrature}.  Both the constraints of symmetry and the use of orthogonal polynomials often lead to requirements on specific node locations to ensure that a method is exact for as large a polynomial order as possible.  Still others approaches have focused on rules that apply to triangles or tetrahedra that extend to arbitrary bounded domains in up to three dimensions and with node sets that do not require a particular configuration \cite{JARBF2016,JARBFMLW2016,JARBF2017,JAR2020,JAR2022,JAR2024,JAR2026,ASMV06}.

A common thread among many of these methods is that they integrate exactly all polynomials up to a particular degree, $0\leq m\in\mathbb{Z}$.  As an analogy, considering Newton-Cotes quadrature rules for an interval (see, e.g., \cite{Atkinson}), exactness for integrating polynomials over a simplex in 1-D translates to an extra order of convergence when the rule is constructed to be exact for polynomials of even degree. In the 1-D case, this extra order of convergence comes from being able to integrate all polynomials of degree $m+1$ exactly.  That is, the method achieves an extra degree of precision.  The remainder of this article shows that an extra order of convergence is achieved a different way when subdividing a 2-D polygonal domain into triangular simplices, instead relying on favorable cancellations of the error.  Section \ref{sec:ProblemStatement} introduces the problem mathematically.  Then sections \ref{sec:ErrorTriangle}, \ref{sec:CongruentSubdivision} and \ref{sec:ErrorsCongruentTriangles} present how the error when approximating the definite integral over a triangle by first subdividing it in a convenient way relates to the error in a single approximation over the triangle.  This is followed by a convergence result for an individual triangle in section \ref{sec:RefinementError} and then a polygonal domain in section \ref{sec:TotalError}.  Numerical experiments demonstrating these results are given in section \ref{sec:Numerics}.  Finally, section \ref{sec:Conclusions} provides some conclusions.

\section{Problem Statement} \label{sec:ProblemStatement}

The superconvergence result developed here assumes that the cubature rule under consideration relies first on subdividing $\Omega$ into a set of triangular subdomains.  That is, $\Omega=\cup_{k=1}^{K}t_{k,0}$, with $t_{k,0}$ the triangle that is the convex hull of the set of its three vertices $\mathbf{v}_{k,l}$, $l=1,2,3$, and with area $h_{k}^{2}$ and barycenter $\mathbf{b}_{k}=(1/3)\sum_{l=1}^{3}\mathbf{v}_{k,l}$.  Explicitly, $t_{k,0}=\mbox{conv}(\{\mathbf{v}_{k,l}\}_{l=1}^{3})$, with $\mbox{conv}$ representing the convex hull of the set.  Supposing further that $t_{k,0}$ and $t_{k',0}$ intersect in at most a common edge when $k\neq k'$, properties of integration allow
\begin{align}
    \iint\limits_{\Omega}f(\mathbf{x})dA=\sum\limits_{k=1}^{K}\iint\limits_{t_{k,0}}f(\mathbf{x})dA.\nonumber
\end{align}
After subdivision the definite integral on $t_{k,0}$ is approximated using a set of $n_{k}$ local cubature nodes $\mathcal{X}_{k,0}=\{\mathbf{x}_{k,j}\}_{j=1}^{n_{k}}$ and weights $\{w_{k,j}\}_{j=1}^{n_{k}}$ by way of
\begin{align}
    \int\limits_{t_{k,0}}f(\mathbf{x})dA=\sum\limits_{j=1}^{N}w_{k,j}f(\mathbf{x}_{k,j})+\mathcal{E}_{t_{k,0},h_{k}}[f].\label{eq:TriangleRule}
\end{align}
It is ideal that the error $\mathcal{E}_{t_{k,0},h_{k}}[f]\to0$ as $h_{k}\to 0$ for all $f$ that are sufficiently many times differentiable.  In particular, methods are most often constructed so that $\mathcal{E}_{t_{k,0},h_{k}}[f]=O(h_{k}^{\rho})$ as $h_{k}\to 0$, for $\rho\geq1$.  To achieve this geometric convergence with respect to $h_{k}$, the set of cubature weights and nodes is often chosen so that the associated rule is exact for all polynomials up to a certain degree. Consider the set of $M_{m}=(m+1)(m+2)/2$ unique multiindices $\boldsymbol{\alpha}\in\mathbb{Z}_{+}^{2}$ that have order at most $m$.  It is sometimes convenient to enumerate these multiindices $\boldsymbol{\alpha}_{l}$, $l=1,2,\ldots,M_{m}$. The set $\{\pi_{k,\boldsymbol{\alpha}}\}_{\lvert\boldsymbol{\alpha}\rvert\leq m}$ with, e.g., $\pi_{k,\boldsymbol{\alpha}}(\mathbf{x})=(\mathbf{x}-\mathbf{b}_{k})^{\boldsymbol{\alpha}}$, forms a basis for the space, $\mathbb{P}_{m}^{2}$, of bivariate polynomials up to degree $m$. Exactness for polynomials up to degree $m$ requires $\mathcal{E}_{t_{k,0},h_{k}}[p]=0$ for all polynomials $p\in\mathbb{P}_{m}^{2}$.

\section{Error in Approximating Definite Integrals on a Triangle} \label{sec:ErrorTriangle}

The Taylor formula of a function $f(\mathbf{x})$ about $\mathbf{b}_{k}$, with $f$ having continuous mixed partial derivatives up to order $m+2$ in a convex neighborhood of $\mathbf{b}_{k}$, can be written as \cite{MultipointTaylor}
\begin{align}
    f(\mathbf{x}) =  \sum\limits_{\lvert\boldsymbol{\alpha}\rvert\leq m}\frac{1}{\boldsymbol{\alpha} !} \partial^{\boldsymbol{\alpha}}f(\mathbf{x})\big|_{\mathbf{x}=\mathbf{b}_{k}}\pi_{k,\boldsymbol{\alpha}}(\mathbf{x})+\sum_{\lvert\boldsymbol{\alpha}\rvert= m+1}\frac{1}{\boldsymbol{\alpha} !} \partial^{\boldsymbol{\alpha}}f(\mathbf{x})\big|_{\mathbf{x}=\mathbf{b}_{k}}\pi_{k,\boldsymbol{\alpha}}(\mathbf{x})+(\mathcal{R}_{m+1}f)(\mathbf{x}) \label{eq:expanded_taylor}
\end{align}
where the remainder term is expressible as
\begin{align}
(\mathcal{R}_{m+1}f)(\mathbf{x})=\sum\limits_{\lvert\boldsymbol{\alpha}\rvert= m+2}\frac{m+2}{\boldsymbol{\alpha}!}(\mathcal{J}_{\boldsymbol{\alpha}}f)(\mathbf{x})\pi_{k,\boldsymbol{\alpha}}(\mathbf{x}),\nonumber
\end{align}
with
\begin{align}
    (\mathcal{J}_{\boldsymbol{\alpha}}f)(\mathbf{x})=\int\limits_{0}^{1}\partial^{\boldsymbol{\alpha}}f(\mathbf{x}')|_{\mathbf{x}'=\mathbf{b}_{k}+\tau(\mathbf{x}-\mathbf{b}_{k})}(1-\tau)^{|\boldsymbol{\alpha}|-1}d\tau.\nonumber
\end{align}

Suppose that $\{w_{k,j}\}_{j=1}^{n}$ is a set of cubature weights for approximating the definite integral of a function over the triangle $t_{k,0}$ and that $\mathcal{X}_{k,0}$ is the set of corresponding quadrature nodes.  Assuming further that this cubature rule is exact for polynomials up to degree $m$, and that the weight set satisfies $w_{k,j}=O(h_{k}^{2})$,
\begin{align}
\mathcal{E}_{t_{k,0},h_{k}}[f]=&\sum_{\lvert\boldsymbol{\alpha}\rvert= m+1}\frac{1}{\boldsymbol{\alpha} !} \partial^{\boldsymbol{\alpha}}f(\mathbf{x})\big|_{\mathbf{x}=\mathbf{b}_{k}}\mathcal{E}_{t_{k,0},h_{k}}[\pi_{k,\boldsymbol{\alpha}}]+\mathcal{E}_{t_{k,0},h_{k}}[\mathcal{R}_{m+1}f],\nonumber
\end{align}
with $\mathcal{E}_{t_{k,0},h_{k}}[\mathcal{R}_{m+1}f]=O(h_{k}^{m+4})$ \cite{JAR2026}.

\section{Construction of Weight Sets Exact for Polynomials Up to a Fixed Degree}

Supposing that the set of cubature nodes is $\mathbb{P}_{2}^{m}$ unisolvent, the $n_{k}\times M_{m}$ matrix $P_{k}$, with entries $[P_{k}]_{jl}=\pi_{k,\boldsymbol{\alpha}_{l}}(\mathbf{x}_{k,j})$, has full rank (this requires that $n_{k}\geq(m+1)(m+2)/2$) (\cite{HW2005}, Definition 2.6).  Cubature weights that are exact for all polynomials up to degree $m$ must satisfy the system of linear equations $P_{k}^{T}\mathbf{w}_{k}=\boldsymbol{\pi}_{k}$ with $[\mathbf{w}_{k}]_{j} = w_{k,j}$, $j=1,2,\ldots,n_{k}$, and $[\boldsymbol{\pi}_{k}]_{l} = \iint_{t_{k,0}}\pi_{k,\boldsymbol{\alpha}_{l}}(\mathbf{x})dA$, $l=1,2,\ldots,M_{m}$.  There are clearly many solutions of this system of equations when $n_{k}>M_{m}$, and often a solution that minimizes an objective, $J_{k}\vcentcolon\mathbb{R}^{{n}_{k}}\to\mathbb{R}$ is sought.  For instance, the solution that minimizes $J_{k}(\mathbf{w}_{k})=\lVert\mathbf{w}_{k}\rVert_{2}^{2}$ is common.  Alternatively, in the context of generating cubature nodes by integrating an interpolating function that is a linear combination of conditionally positive definite radial basis functions, $\varphi\vcentcolon\mathbb{R}_{+}\to\mathbb{R}$, and polynomials, an optimizer (or at least a stationary point) for the quadratic program (with $n_{k}\times n_{k}$ matrix with entries $[\Phi_{k}]_{ij}=\varphi(\lVert\mathbf{x}_{k,i}-\mathbf{x}_{k,j}\rVert_{2})$ and $n_{k}\times 1$ vector with entries $[\boldsymbol{\phi}_{k}]_{j} = \iint_{t_{k,0}}\varphi(\lVert\mathbf{x}-\mathbf{x}_{k,j}\rVert_{2})dA$, $i,j=1,2,\ldots,n_{k}$)
\begin{align}
    \min\limits_{\mathbf{w}_{k}}&\quad J_{k}(\mathbf{w}_{k})=\frac{1}{2}\mathbf{w}_{k}^{T}\Phi_{k}\mathbf{w}_{k}-\mathbf{w}_{k}^{T}\boldsymbol{\phi}_{k}\nonumber \\
    \mbox{s.t.} & \quad P_{k}^{T}\mathbf{w}_{k}=\boldsymbol{\pi}_{k}.\label{eq:RBFObjective}
\end{align}
is determined \cite{VB2019}.  Writing $\mathbf{x}_{k,j}=\mathbf{b}_{k}+h_{k}\boldsymbol{\beta}_{k,j}$, where $\boldsymbol{\beta}_{k,j}$ encodes the relative distance and direction from $\mathbf{b}_{k}$ to $\mathbf{x}_{k,j}$, implies $[P_{k}]_{j,l}=h_{k}^{\lvert\boldsymbol{\alpha}_{l}\rvert}\boldsymbol{\beta}_{k,j}^{\boldsymbol{\alpha}_{l}}$.  A similar change of variables demonstrates that $[\boldsymbol{\pi}_{k}]_{l} = O(h_{k}^{\lvert\boldsymbol{\alpha}_{l}\rvert+2})$.  Therefore, the solution to the system of linear equations $P_{k}^{T}\mathbf{w}_{k}=\boldsymbol{\pi}_{k}$ has entries that satisfy $[\mathbf{w}_{k}]_{j}=O(h_{k}^{2})$ as $h_{k}\to0$, $j=1,2,\ldots,n_{k}$ (see, e.g., \cite{DavydovSchaback} or \cite{JAR2026} for more detailed discussions of similar results).

\section{Congruent Subdivision of a Triangle} \label{sec:CongruentSubdivision}

The triangle $t_{k,0}$ can be further subdivided into four congruent triangles by connecting the midpoints of its sides with three line segments. Each of these triangles has the same interior angles as $t_{k,0}$.  The four congruent triangles are $t_{k,i}=\mbox{conv}(\{(1/2)(\mathbf{v}_{k,l}+\mathbf{v}_{k,i})\}_{l=1}^{3})$, $i=1,2,3$, and $t_{k,4}=\mbox{conv}(\{-(1/2)(\mathbf{v}_{k,l}-\mathbf{b}_{k})+\mathbf{b}_{k}\}_{l=1}^{3})$, each with area $(h_{k}/2)^{2}$.  Now suppose that the integral of $f$ is approximated over each of these congruent triangles but using the sets of nodes $\mathcal{X}_{k,i}=\{(1/2)(\mathbf{x}_{k,j}+\mathbf{v}_{k,i})\}_{j=1}^{n}$, $i=1,2,3$, and $\mathcal{X}_{k,4} = \{-(1/2)(\mathbf{x}_{k,j}-\mathbf{b}_{k})+\mathbf{b}_{k}\}_{j=1}^{n}$, respectively, and the set of cubature weights $\{(1/2)^{2}w_{k,j}\}_{j=1}^{n}$.  That is, the cubature nodes are shifted/rotated and scaled versions of the original node set and the weights are a scaled version of the original weight set.  The results of the following section show that these cubature rules are again exact for polynomials up to degree $m$ on the set of congruent triangles.  Figure \ref{fig:CongruentTriangleSubdivision} illustrates an example of the subdivision of a triangle, $t_{k,0}$, into four congruent triangles, each with the same interior angles as $t_{k,0}$.

\begin{figure}
\begin{center}
\includegraphics[width=0.3\linewidth]{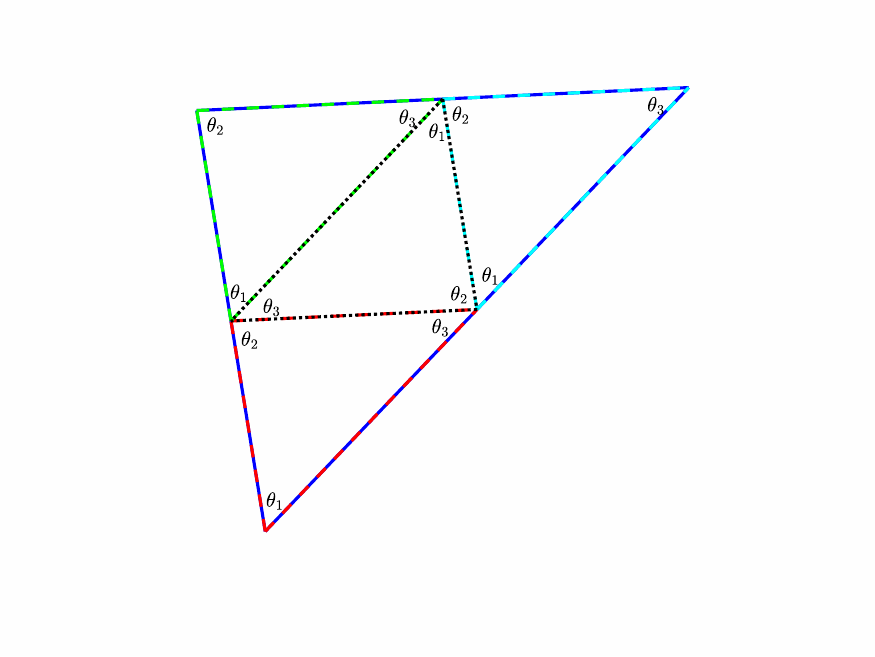}
\end{center}
\caption{A triangle $t_{k,0}$ with edges represented by solid (blue) line segments overlayed with the four congruent triangles, $t_{k,i}$, $i=1,2,3$, with edges represented by dashed line segments (cyan, green and red, where color is available) and $t_{k,4}$ with edges represented by black dotted line segments.  Equal interior angles are labeled $\theta_{i}$, $i=1,2,3$.}
\label{fig:CongruentTriangleSubdivision}
\end{figure}

First, however, it is important to realize that the integral of $f$ over $t_{k,0}$ can then be approximated as
\begin{align}
    \int\limits_{t_{k,0}}f(\mathbf{x})dA=&\sum_{i=1}^{3}\sum\limits_{j=1}^{n_{k}}(1/2)^{2}w_{k,j}f((1/2)(\mathbf{x}_{k,j}+\mathbf{v}_{k,i}))+\sum\limits_{j=1}^{n_{k}}(1/2)^{2}w_{k,j}f((1/2)(-(1/2)(\mathbf{x}_{k,j}-\mathbf{b}_{k})+\mathbf{b}_{k})+\nonumber\\
    &\mathcal{E}_{t_{k,0},h_{k}/2}[f].\nonumber
\end{align}
Noting that
\begin{align}
    \int\limits_{t_{k,0}}f(\mathbf{x})dA=&\sum_{i=1}^{4}\int\limits_{t_{k,i}}f(\mathbf{x})dA\nonumber
\end{align}
the error $\mathcal{E}_{t_{k,0},h_{k}/2}[f]=\sum_{i=1}^{4}\mathcal{E}_{t_{k,i},h_{k}/2}[f]$.  Substitution of the Taylor formula for $f$ about $\mathbf{b}_{k}$ into this expression reveals
\begin{align}
\mathcal{E}_{t_{k,0},h_{k}/2}[f]=&\sum\limits_{i=1}^{4}\left(\sum_{\lvert\boldsymbol{\alpha}\rvert= m+1}\frac{1}{\boldsymbol{\alpha} !} \partial^{\boldsymbol{\alpha}}f(\mathbf{x})\big|_{\mathbf{x}=\mathbf{b}_{k}}\mathcal{E}_{t_{k,i},h_{k}/2}[\pi_{k,\boldsymbol{\alpha}}]+\mathcal{E}_{t_{k,i},h_{k}/2}[\mathcal{R}_{m+1}f]\right).\label{eq:errorfhalf}
\end{align}
This expression motivates further interrogation of the error in approximating integrals over each of the four triangles in the subdivision.

\section{Errors When Integrating Monomials Over Congruent Triangles} \label{sec:ErrorsCongruentTriangles}

Notice that, for $i=1,2,3$
\begin{align}
\iint\limits_{t_{k,i}}(\mathbf{x}-\mathbf{b}_{k})^{\boldsymbol{\alpha}}dA = \left(\frac{1}{2}\right)^{2}\iint\limits_{t_{k,0}}\left(\frac{1}{2}(\mathbf{x}+\mathbf{v}_{k,i})-\mathbf{b}_{k}\right)^{\boldsymbol{\alpha}}dA=\left(\frac{1}{2}\right)^{2+\lvert\boldsymbol{\alpha}\rvert}\iint\limits_{t_{k,0}}\left((\mathbf{x}-\mathbf{b}_{k})+(\mathbf{v}_{k,i}-\mathbf{b}_{k})\right)^{\boldsymbol{\alpha}}dA\nonumber
\end{align}
and
\begin{align}
\sum\limits_{j=1}^{n_{k}}\left(\frac{1}{2}\right)^{2}w_{k,j}\left(\frac{1}{2}(\mathbf{x}_{k,j}+\mathbf{v}_{k,i})-\mathbf{b}_{k}\right)^{\boldsymbol{\alpha}}=\left(\frac{1}{2}\right)^{2+\lvert\boldsymbol{\alpha}\rvert}\sum\limits_{j=1}^{n_{k}}w_{k,j}\left((\mathbf{x}_{k,j}-\mathbf{b}_{k})+(\mathbf{v}_{k,i}-\mathbf{b}_{k})\right)^{\boldsymbol{\alpha}}\nonumber
\end{align}
Therefore,
\begin{align}
    \mathcal{E}_{t_{k,i},h_{k}/2}[\pi_{k,\boldsymbol{\alpha}}] = \left(\frac{1}{2}\right)^{2+\lvert\boldsymbol{\alpha}\rvert}\left(\iint\limits_{t_{k,0}}\left((\mathbf{x}-\mathbf{b}_{k})+(\mathbf{v}_{k,i}-\mathbf{b}_{k})\right)^{\boldsymbol{\alpha}}dA-\sum\limits_{j=1}^{n_{k}}w_{k,j}\left((\mathbf{x}_{k,j}-\mathbf{b}_{k})+(\mathbf{v}_{k,i}-\mathbf{b}_{k})\right)^{\boldsymbol{\alpha}}\right).\label{eq:errortki}
\end{align}

Similarly,
\begin{align}
\iint\limits_{t_{k,4}}(\mathbf{x}-\mathbf{b}_{k})^{\boldsymbol{\alpha}}dA = \left(\frac{1}{2}\right)^{2}\iint\limits_{t_{k,0}}\left(-\frac{1}{2}(\mathbf{x}-\mathbf{b}_{k})+\mathbf{b}_{k}-\mathbf{b}_{k}\right)^{\boldsymbol{\alpha}}dA=\left(\frac{1}{2}\right)^{2+\lvert\boldsymbol{\alpha}\rvert}(-1)^{\lvert\boldsymbol{\alpha}\rvert}\iint\limits_{t_{k,0}}\left(\mathbf{x}-\mathbf{b}_{k}\right)^{\boldsymbol{\alpha}}dA\nonumber
\end{align}
and
\begin{align}
\sum\limits_{j=1}^{n_{k}}\left(\frac{1}{2}\right)^{2}w_{k,j}\left(-\frac{1}{2}(\mathbf{x}_{k,j}-\mathbf{b}_{k})+\mathbf{b}_{k}-\mathbf{b}_{k}\right)^{\boldsymbol{\alpha}}=\left(\frac{1}{2}\right)^{2+\lvert\boldsymbol{\alpha}\rvert}(-1)^{\lvert\boldsymbol{\alpha}\rvert}\sum\limits_{j=1}^{n_{k}}w_{k,j}\left(\mathbf{x}_{k,j}-\mathbf{b}_{k}\right)^{\boldsymbol{\alpha}},\nonumber
\end{align}
so that
\begin{align}
    \mathcal{E}_{t_{k,4},h_{k}/2}[\pi_{k,\boldsymbol{\alpha}}] = \left(\frac{1}{2}\right)^{2+\lvert\boldsymbol{\alpha}\rvert}(-1)^{\lvert\boldsymbol{\alpha}\rvert}\mathcal{E}_{t_{k,0},h_{k}}[\pi_{k,\boldsymbol{\alpha}}].\nonumber
\end{align}

The following lemma allows $\mathcal{E}_{t_{k,i},h_{k}/2}[\pi_{k,\boldsymbol{\alpha}}]$ to also be related directly to $\mathcal{E}_{t_{k,0},h_{k}}[\pi_{k,\boldsymbol{\alpha}}]$ in the proof of the theorem of the next section.  This fact is key to the superconvergence result of this work.
\begin{lemma}
Suppose that the cubature rule with weights $\{w_{k,j}\}_{j=1}^{n_{k}}$ and node set $\mathcal{X}_{k,0}$ is exact for polynomials up to degree $m$ and that $\lvert\boldsymbol{\alpha}\rvert=m+1$, then
\begin{align}
\iint\limits_{t_{k,0}}\left((\mathbf{x}-\mathbf{b}_{k})+(\mathbf{v}_{k,i}-\mathbf{b}_{k})\right)^{\boldsymbol{\alpha}}dA-\sum\limits_{j=1}^{n_{k}}w_{k,j}\left((\mathbf{x}_{k,j}-\mathbf{b}_{k})+(\mathbf{v}_{k,i}-\mathbf{b}_{k})\right)^{\boldsymbol{\alpha}}=\mathcal{E}_{t_{k,0},h_{k}}[\pi_{k,\boldsymbol{\alpha}}].\nonumber
\end{align}
\end{lemma}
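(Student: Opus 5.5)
The proof is a binomial expansion combined with exactness of the rule. Write $\mathbf{y}=\mathbf{x}-\mathbf{b}_{k}$ and $\mathbf{c}=\mathbf{v}_{k,i}-\mathbf{b}_{k}$, the latter a fixed vector independent of $\mathbf{x}$. For a multiindex $\boldsymbol{\alpha}=(\alpha_{1},\alpha_{2})$ the multivariate binomial theorem gives
\begin{align}
(\mathbf{y}+\mathbf{c})^{\boldsymbol{\alpha}}=\sum_{\boldsymbol{\gamma}\leq\boldsymbol{\alpha}}\binom{\boldsymbol{\alpha}}{\boldsymbol{\gamma}}\mathbf{c}^{\boldsymbol{\alpha}-\boldsymbol{\gamma}}\mathbf{y}^{\boldsymbol{\gamma}},\nonumber
\end{align}
where $\boldsymbol{\gamma}\leq\boldsymbol{\alpha}$ is understood componentwise and $\binom{\boldsymbol{\alpha}}{\boldsymbol{\gamma}}=\binom{\alpha_{1}}{\gamma_{1}}\binom{\alpha_{2}}{\gamma_{2}}$. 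In the notation of the paper, $\mathbf{y}^{\boldsymbol{\gamma}}=\pi_{k,\boldsymbol{\gamma}}(\mathbf{x})$, so the integrand is a linear combination of the basis polynomials $\pi_{k,\boldsymbol{\gamma}}$.

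The left-hand side of the lemma is the error functional $\mathcal{E}_{t_{k,0},h_{k}}$, that is, the integral over $t_{k,0}$ minus the cubature sum with weights $w_{k,j}$ and nodes $\mathbf{x}_{k,j}$, applied to the polynomial $\mathbf{x}\mapsto(\mathbf{x}-\mathbf{b}_{k}+\mathbf{c})^{\boldsymbol{\alpha}}$. This holds because the same function is evaluated in both the integral and the sum. The functional is linear, and the coefficients $\binom{\boldsymbol{\alpha}}{\boldsymbol{\gamma}}\mathbf{c}^{\boldsymbol{\alpha}-\boldsymbol{\gamma}}$ do not depend on $\mathbf{x}$, so
\begin{align}
\mathcal{E}_{t_{k,0},h_{k}}\big[(\cdot-\mathbf{b}_{k}+\mathbf{c})^{\boldsymbol{\alpha}}\big]=\sum_{\boldsymbol{\gamma}\leq\boldsymbol{\alpha}}\binom{\boldsymbol{\alpha}}{\boldsymbol{\gamma}}\mathbf{c}^{\boldsymbol{\alpha}-\boldsymbol{\gamma}}\,\mathcal{E}_{t_{k,0},h_{k}}[\pi_{k,\boldsymbol{\gamma}}].\nonumber
\end{align}

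Every $\boldsymbol{\gamma}\neq\boldsymbol{\alpha}$ with $\boldsymbol{\gamma}\leq\boldsymbol{\alpha}$ satisfies $\lvert\boldsymbol{\gamma}\rvert\leq\lvert\boldsymbol{\alpha}\rvert-1=m$. Hence $\pi_{k,\boldsymbol{\gamma}}\in\mathbb{P}_{m}^{2}$, and by the assumed exactness of the rule $\mathcal{E}_{t_{k,0},h_{k}}[\pi_{k,\boldsymbol{\gamma}}]=0$. The only surviving term is $\boldsymbol{\gamma}=\boldsymbol{\alpha}$, which has coefficient $\binom{\boldsymbol{\alpha}}{\boldsymbol{\alpha}}\mathbf{c}^{\mathbf{0}}=1$, and this gives exactly $\mathcal{E}_{t_{k,0},h_{k}}[\pi_{k,\boldsymbol{\alpha}}]$.

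No step is a real obstacle. The one point to check carefully is that the left-hand side really is the error functional applied to a single polynomial. This requires the integral and the sum to use the same shift $\mathbf{c}$, which they do by (\ref{eq:errortki}).
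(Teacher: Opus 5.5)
Your proposal is correct and follows essentially the same route as the paper: expand $\left((\mathbf{x}-\mathbf{b}_{k})+(\mathbf{v}_{k,i}-\mathbf{b}_{k})\right)^{\boldsymbol{\alpha}}$ with the multi-binomial theorem, then use exactness to kill every term of degree at most $m$ in $\mathbf{x}-\mathbf{b}_{k}$. Only the top term $\mathcal{E}_{t_{k,0},h_{k}}[\pi_{k,\boldsymbol{\alpha}}]$ survives; your indexing by the exponent $\boldsymbol{\gamma}$ on $\mathbf{x}-\mathbf{b}_{k}$ is simply the complement of the paper's $\boldsymbol{\beta}=\boldsymbol{\alpha}-\boldsymbol{\gamma}$.
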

\begin{proof}
Application of the multi-binomial theorem produces
\begin{align}
\iint\limits_{t_{k,0}}&\left((\mathbf{x}-\mathbf{b}_{k})+(\mathbf{v}_{k,i}-\mathbf{b}_{k})\right)^{\boldsymbol{\alpha}}-\sum\limits_{j=1}^{n_{k}}w_{k,j}\left((\mathbf{x}_{k,j}-\mathbf{b}_{k})+(\mathbf{v}_{k,i}-\mathbf{b}_{k})\right)^{\boldsymbol{\alpha}}=\nonumber\\
&\sum\limits_{\mathbf{0}\leq\boldsymbol{\beta}\leq\boldsymbol{\alpha}}\left(\begin{array}{c}\boldsymbol{\alpha} \\\boldsymbol{\beta}\end{array}\right)(\mathbf{v}_{k,i}-\mathbf{b}_{k})^{\boldsymbol{\beta}}\left(\iint\limits_{t_{k,0}}\left(\mathbf{x}-\mathbf{b}_{k}\right)^{\boldsymbol{\alpha}-\boldsymbol{\beta}}dA-\sum\limits_{j=1}^{n_{k}}w_{k,j}\left(\mathbf{x}_{k,j}-\mathbf{b}_{k}\right)^{\boldsymbol{\alpha}-\boldsymbol{\beta}}\right).\nonumber
\end{align}
Since the cubature rule is exact for all polynomials up to degree $m$ and $\lvert\boldsymbol{\alpha}-\boldsymbol{\beta}\rvert\leq m$ when $\boldsymbol{\beta}\neq\mathbf{0}$ the result follows.
\end{proof}

\section{A Relationship Between Errors Under Refinement} \label{sec:RefinementError}

The results of section \ref{sec:ErrorsCongruentTriangles} can be combined into a \textit{local} superconvergence result over individual triangles by comparing the error in the cubature over $t_{k,0}$ (with area $h_{k}$) to the combined error when cubature rules are applied over each of $t_{k,i}$, $i=1,2,3,4$ (each with area $h_{k}/2$), in turn, and then summed. The following theorem \ref{thm:LocalConvergenceTheorem} states this result.
\begin{theorem} \label{thm:LocalConvergenceTheorem}
Suppose that $f\vcentcolon\Omega\to\mathbb{R}$ is continuous and has continuous mixed partial derivatives up to order $m+2$ on the convex hull of the set $\bigcup\limits_{i=1}^{4}\left(\mathcal{X}_{k,i}\bigcup\{\mathbf{v}_{k,l}\}_{l=1}^{3}\right)$.  If the cubature rule \eqref{eq:TriangleRule}, with weights $w_{k,j}=O(h_{k}^{2})$, $j=1,2,\ldots,n_{k}$, and node set $\mathcal{X}_{k,0}$, is exact for all bivariate polynomials up to degree $m$, then it has error term
\begin{align}
    \mathcal{E}_{t_{k,0},h_{k}}[f]=\left\{\begin{array}{cc}O(h_{k}^{m+4}),& m\mbox{ even}\\
    O(h_{k}^{m+3}), & m\mbox{ odd}\end{array}\right.\nonumber
\end{align}
as $h_{k}\to0$.
\end{theorem}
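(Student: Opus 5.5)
Combine \eqref{eq:errorfhalf} with \eqref{eq:errortki}, the $t_{k,4}$ identity and the preceding lemma. The aim is to show that, for even $m$, the leading $(m+1)$-order terms of the refined error cancel. For $i=1,2,3$ the lemma gives
\begin{align}
\mathcal{E}_{t_{k,i},h_{k}/2}[\pi_{k,\boldsymbol{\alpha}}]=\left(\tfrac{1}{2}\right)^{m+3}\mathcal{E}_{t_{k,0},h_{k}}[\pi_{k,\boldsymbol{\alpha}}],\qquad \lvert\boldsymbol{\alpha}\rvert=m+1,\nonumber
\end{align}
and for $t_{k,4}$ we have $\mathcal{E}_{t_{k,4},h_{k}/2}[\pi_{k,\boldsymbol{\alpha}}]=(1/2)^{m+3}(-1)^{m+1}\mathcal{E}_{t_{k,0},h_{k}}[\pi_{k,\boldsymbol{\alpha}}]$. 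Summing over $i$ gives the factor $(1/2)^{m+3}(3+(-1)^{m+1})$. This equals $2(1/2)^{m+3}$ when $m$ is even and $4(1/2)^{m+3}$ when $m$ is odd. The remainder pieces $\mathcal{E}_{t_{k,i},h_{k}/2}[\mathcal{R}_{m+1}f]$ are each $O(h_k^{m+4})$ by the estimate of section \ref{sec:ErrorTriangle}. The Taylor expansion there is still about $\mathbf{b}_k$ rather than the barycenter of $t_{k,i}$, but the nodes lie within distance $O(h_k)$ of $\mathbf{b}_k$ and the weights are $O(h_k^2)$, so the same bound holds on the stated convex hull.

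Write $E(h)$ for the error on a triangle of scale $h$ and $A(h)=\sum_{\lvert\boldsymbol{\alpha}\rvert=m+1}\frac{1}{\boldsymbol{\alpha}!}\partial^{\boldsymbol{\alpha}}f(\mathbf{b}_k)\mathcal{E}_{t_{k,0},h}[\pi_{k,\boldsymbol{\alpha}}]$ for its leading part. Then $E(h)=A(h)+O(h^{m+4})$ and $E(h/2)=c_m A(h)+O(h^{m+4})$, with $c_m=2^{-(m+2)}$ for even $m$ and $c_m=2^{-(m+1)}$ for odd $m$. In the odd case, scaling $\pi_{k,\boldsymbol{\alpha}}$ shows $A(h)=O(h^{m+3})$, since the polynomial has degree $m+1$ and the weights are $O(h^2)$ on a domain of area $h^2$. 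This settles the odd case.

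For even $m$ I would show that $A(h)$ itself is $O(h^{m+4})$. This is the main obstacle. My first attempt is a scaling argument. With $\mathbf{x}=\mathbf{b}_k+h\boldsymbol{\beta}$ and the normalized node pattern fixed as $h$ shrinks, $\mathcal{E}_{t_{k,0},h}[\pi_{k,\boldsymbol{\alpha}}]=h^{m+3}e_{\boldsymbol{\alpha}}$ with $e_{\boldsymbol{\alpha}}$ independent of $h$. Under the congruent subdivision the child rules use the same normalized pattern, up to the point reflection for $t_{k,4}$. Applied to $\pi_{k,\boldsymbol{\alpha}}$, the error over the four children equals the error over $t_{k,0}$ of the refined composite rule. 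Exact scaling gives $\mathcal{E}_{t_{k,0},h/2}[\pi_{k,\boldsymbol{\alpha}}]$ and hence $E(h/2)\approx (h/2)^{m+3}e$, so the ratio $E(h/2)/E(h)$ must be $2^{-(m+3)}$. The cancellation identity instead forces the ratio $2^{-(m+2)}$. These are consistent only if the leading coefficient vanishes, giving $\mathcal{E}[\pi_{k,\boldsymbol{\alpha}}]=0$ for $\lvert\boldsymbol{\alpha}\rvert=m+1$ and hence $E(h)=O(h^{m+4})$.

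The delicate point is making this identification rigorous. One must check that the composite rule on $t_{k,0}$ is the same rule as the original rule on a triangle of scale $h/2$. A cleaner alternative is to iterate the relation $E(h/2^{r})$ and compare geometric rates. If that identification fails, I would fall back on proving directly that $e_{\boldsymbol{\alpha}}=0$ for odd degree $m+1$. This would use exactness on $\mathbb{P}_m^2$ together with the point-reflection structure of $t_{k,4}$.
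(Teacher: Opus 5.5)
\textbf{The even case has a genuine gap, and the statement itself fails there.} Your factor $(1/2)^{m+3}(3+(-1)^{m+1})$ is correct. So is your odd case, which is just the trivial bound $A(h)=O(h^{m+3})$ and holds for every $m$.

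The even case breaks at the ``exact scaling'' step. The four-child composite rule on $t_{k,0}$ is not the original rule on a triangle of scale $h_k/2$: it has $4n_k$ nodes, covers area $h_k^2$, and one of its pieces is point-reflected. Its error on $\pi_{k,\boldsymbol{\alpha}}$ is exactly what your first paragraph computes, namely $(3+(-1)^{m+1})(h_k/2)^{m+3}e_{\boldsymbol{\alpha}}$. There is no second expression $(h_k/2)^{m+3}e_{\boldsymbol{\alpha}}$ for it to be consistent with, so nothing forces $e_{\boldsymbol{\alpha}}=0$.

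The consistency argument also proves too much. Nothing in it uses the parity of $m$: for odd $m$ it would set $4\cdot 2^{-(m+3)}$ against $2^{-(m+3)}$ and force $e_{\boldsymbol{\alpha}}=0$ there too. Take the centroid rule ($m=1$, one node $\mathbf{b}_k$, weight $h_k^2$), whose children are again centroid rules. The argument would then give $\iint_{t_{k,0}}(\mathbf{x}-\mathbf{b}_k)^{(2,0)}dA=0$, which is false.

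The fallback cannot succeed either. Fix the normalized pattern and test on $f=\pi_{k,\boldsymbol{\alpha}}$. Then ``$\mathcal{E}_{t_{k,0},h_k}[f]=O(h_k^{m+4})$ for all admissible $f$'' is equivalent to $e_{\boldsymbol{\alpha}}=0$ for all $\lvert\boldsymbol{\alpha}\rvert=m+1$, i.e. to exactness on $\mathbb{P}_{m+1}^2$. Exactness on $\mathbb{P}_m^2$ does not imply this. The point reflection defining $t_{k,4}$ is part of how the refined rule is built and places no constraint on the original rule.

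Here is a counterexample for $m=0$. Take one node $\mathbf{b}_k+h_k\boldsymbol{\beta}$ inside $t_{k,0}$, with $\boldsymbol{\beta}\neq\mathbf{0}$ fixed and weight $h_k^2$. Since $\iint_{t_{k,0}}(\mathbf{x}-\mathbf{b}_k)dA=\mathbf{0}$, we get $\mathcal{E}_{t_{k,0},h_k}[f]=-h_k^3\nabla f(\mathbf{b}_k)\cdot\boldsymbol{\beta}+O(h_k^4)$. This is not $O(h_k^4)$, although every hypothesis of the theorem holds. So the even-$m$ half of the statement cannot be proved from these hypotheses by any route.

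The paper's proof reaches the even-$m$ conclusion through the same identification you flagged as delicate. It sets $\mathcal{E}_{t_{k,0},h_k/2}[f]/\mathcal{E}_{t_{k,0},h_k}[f]=(1/2)^{\rho}(1+o(1))$ for the four-child composite. It also writes the factor as $3+(1+(-1)^{\lvert\boldsymbol{\alpha}\rvert})$ rather than your correct $3+(-1)^{\lvert\boldsymbol{\alpha}\rvert}$.

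What your factor legitimately shows is a cancellation between $t_{k,4}$ and the other children in the \emph{summed} error over $t_{k,0}$. Its leading term shrinks by $2^{-(m+2)}$ for even $m$, versus $2^{-(m+1)}$ for odd $m$. That is a statement about composite or global error under refinement, not a local $O(h_k^{m+4})$ bound for a single rule.
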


\begin{proof}
Applying the lemma to \eqref{eq:errortki} reveals that, for $i=1,2,3$
\begin{align}
    \mathcal{E}_{t_{k,i},h_{k}/2}[\pi_{k,\boldsymbol{\alpha}}] = \left(\frac{1}{2}\right)^{2+\lvert\boldsymbol{\alpha}\rvert}\mathcal{E}_{t_{k,0},h_{k}}[\pi_{k,\boldsymbol{\alpha}}].\nonumber
\end{align}

Returning to the expression \eqref{eq:errorfhalf}, the results of section \ref{sec:ErrorsCongruentTriangles} imply
\begin{align}
\mathcal{E}_{t_{k,0},h_{k}/2}[f]=&\sum_{\lvert\boldsymbol{\alpha}\rvert= m+1}\frac{1}{\boldsymbol{\alpha} !} \partial^{\boldsymbol{\alpha}}f(\mathbf{x})\big|_{\mathbf{x}=\mathbf{b}_{k}}\left(\frac{1}{2}\right)^{2+\lvert\boldsymbol{\alpha}\rvert}\left(3+(1+(-1)^{\lvert\boldsymbol{\alpha}\rvert})\right)\mathcal{E}_{t_{k,0},h_{k}}[\pi_{k,\boldsymbol{\alpha}}]+\sum\limits_{i=1}^{4}\mathcal{E}_{t_{k,i},h_{k}/2}[\mathcal{R}_{m+1}f]\nonumber\\
=&\left(\frac{1}{2}\right)^{m+3+\frac{1}{2}(1-(-1)^{m+1})}\sum_{\lvert\boldsymbol{\alpha}\rvert= m+1}\frac{1}{\boldsymbol{\alpha} !} \partial^{\boldsymbol{\alpha}}f(\mathbf{x})\big|_{\mathbf{x}=\mathbf{b}_{k}}\mathcal{E}_{t_{k,0},h_{k}}[\pi_{k,\boldsymbol{\alpha}}]+O(h_{k}^{m+4}),\nonumber
\end{align}
where $\mathcal{E}_{t_{k,i},h_{k}/2}[\mathcal{R}_{m+1}f]=O(h_{k}^{m+4})$ follows from \cite{JAR2026}.  Further,
\begin{align}
\frac{\mathcal{E}_{t_{k,0},h_{k}/2}[f]}{\mathcal{E}_{t_{k,0},h_{k}}[f]}=\left(\frac{1}{2}\right)^{m+3+\frac{1}{2}(1-(-1)^{m+1})}+O(h_{k}^{m+4}).\label{eq:errorratio}
\end{align}

To recover the convergence order $\rho$, express $\mathcal{E}_{t_{k,0},h_{k}}[f]$ as a power series in $h_{k}$ so that as $h_{k}\to0$
\begin{align}
\mathcal{E}_{t_{k,0},h_{k}}[f]= c h_{k}^{\rho}+o(h_{k}^{\rho})=c h_{k}^{\rho}\left(1+\frac{o(1)}{c}\right)=c h_{k}^{\rho}\left(1+o(1)\right). \nonumber
\end{align}
Therefore, $\mathcal{E}_{t_{k,0},h_{k}/2}[f]/\mathcal{E}_{t_{k,0},h_{k}}[f]= (1/2)^{\rho}(1+o(1))$.  Comparing this to \eqref{eq:errorratio} it is clear that as $h_{k}\to 0$, $\rho = m+3+\frac{1}{2}(1-(-1)^{m+1})$.
\end{proof}

\section{Total Error Over a Polygonal Domain} \label{sec:TotalError}

The error when approximating the integral over all of $\Omega$ by approximating the integrals over each of the triangles $t_{k,0}$, then summing the approximations inherits this extra order of convergence when $m$ is even.  Suppose now that the assumptions of theorem \ref{thm:LocalConvergenceTheorem} hold for each value $k=1,2,\ldots,K$ and that $K=O(h^{-2})$, with $h=\max\{h_{k}\}_{k=1}^{K}$, as $h\to0$.  This is certainly the case if, e.g., Delaunay triangulations of quasi-uniformly spaced node sets are considered (e.g., \cite{HW2005}, Proposition 14.1), and is even a fair assumption triangulations of randomly spaced nodes that are drawn from distributions that feature a fill distance that scales appropriately and the triangulation is shape regular \cite{HW2005,Ciarlet2002FiniteElement}.  Then the total error, $\mathcal{E}_{\Omega,h}[f]=\sum_{k=1}^{K}\mathcal{E}_{t_{k,0},h_{k}}[f]$, has magnitude bounded above as
\begin{align}
\left\lvert\mathcal{E}_{\Omega,h}[f]\right\rvert\leq\sum_{k=1}^{K}\left\lvert\mathcal{E}_{t_{k,0},h_{k}}[f]\right\rvert\leq K\left\lvert\mathcal{E}_{t_{k',0},h_{k'}}[f]\right\lvert\leq (D_{1}h^{-2})(D_{2}h^{m+3+\frac{1}{2}(1-(-1)^{m+1})}),\nonumber
\end{align}
for some constants $D_{1}$ and $D_{2}$ independent of $h$, with $k'$ the value for which the maximum area defining $h$ is obtained.
Therefore, the total error is $\mathcal{E}_{\Omega,h}[f]=O(h^{m+1+\frac{1}{2}(1-(-1)^{m+1})})$ as $h\to0$, and it is easy to see that an extra order of convergence is realized for even $m$.

\section{Numerical Results} \label{sec:Numerics}

Numerical experiments consider approximating the integrals of
\begin{align}
f_{1}(\mathbf{x}) = \sin(\pi\lVert\mathbf{x}-\mathbf{x}_{0}\rVert_{2}^{2})\nonumber
\end{align}
and
\begin{align}
f_{2}(\mathbf{x}) = \lVert\mathbf{x}-\mathbf{x}_{0}\rVert_{2}^{5},\nonumber
\end{align}
with $\mathbf{x}_{0}=[-0.09\mbox{ }0.39]^{T}$ a randomly chosen shift, over the unit square centered at the origin (i.e., $\Omega=[-1/2,1/2]^{2}$, so that $\iint_{\Omega}f(\mathbf{x})dA\approx0.591327220397927$  and $\iint_{\Omega}f(\mathbf{x})dA\approx0.134574061691459$). Approximations are constructed on 10 distinct quasi-uniformly or pseudo-randomly spaced node sets, $\mathcal{X}_{N}$, for each of 30 values of $N$ that are nearly equally spaced by base 10 logarithm ranging from $10^{3}$ to $10^{5}$.  Two such node quasi-uniform node sets, and the triangulations of those sets, when $N=10^{3}$, are displayed in figure \ref{fig:ExampleTriangulations}.
\begin{figure}
\begin{center}
\includegraphics[width=\linewidth]{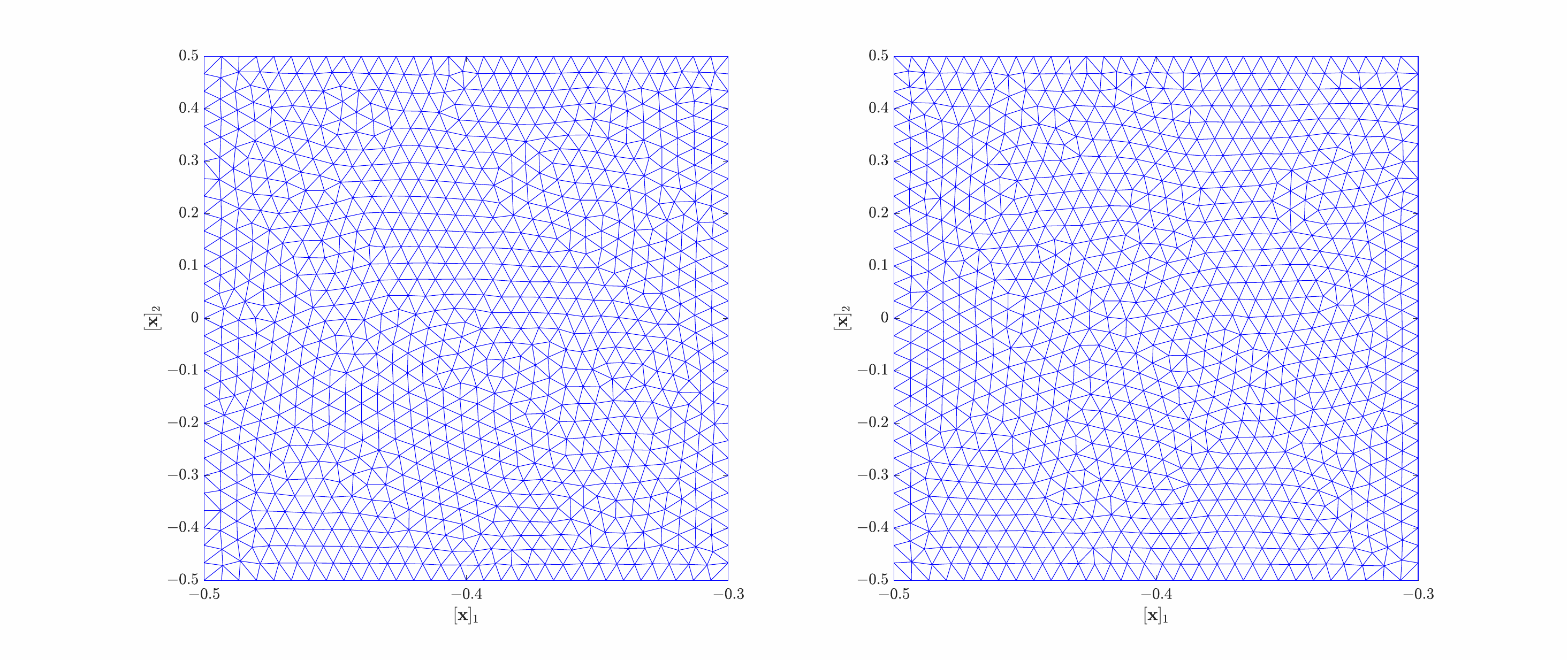}
\end{center}
\caption{Two quasi-uniform node sets and triangulations with $N=1000$ and $K=1878$.}
\label{fig:ExampleTriangulations}
\end{figure}

Local cubature weight sets are computed using the two procedures described at the beginning of this section, with $n_{k}=2M_{m}$, a choice guided by observations in, e.g., \cite{JARBF2017}, and $\varphi(r) = r^{3}$.  The local cubature node sets, $\mathcal{X}_{k,0}$, consist of the $n_{k}$ nodes in $\mathcal{X}_{N}$ nearest to $\mathbf{b}_{k}$.  For each value of $m=1,2,\ldots,7$, figures \ref{fig:Errorf1} and \ref{fig:Errorf2} illustrate the total error, $\mathcal{E}_{\Omega,h}[f_{i}]$, $i=1,2$, respectively, averaged over the 10 distinct node sets for each choice of $N$.  Note that for these quasi-uniformly spaced sets of nodes $N=O(h^{-2})$, so that the error is expected to behave as
\begin{align}
    \mathcal{E}_{\Omega,h}[f]=\left\{\begin{array}{cc}O(N^{-(m+2)/2}),& m\mbox{ even}\\
    O(N^{-(m+1)/2}), & m\mbox{ odd}\end{array}\right.,\nonumber
\end{align}
which is precisely what is shown for both methods of computing the weight set, with the RBF-based cubature performing slightly better in this case.

\begin{figure}
\begin{center}
\includegraphics[width=\linewidth]{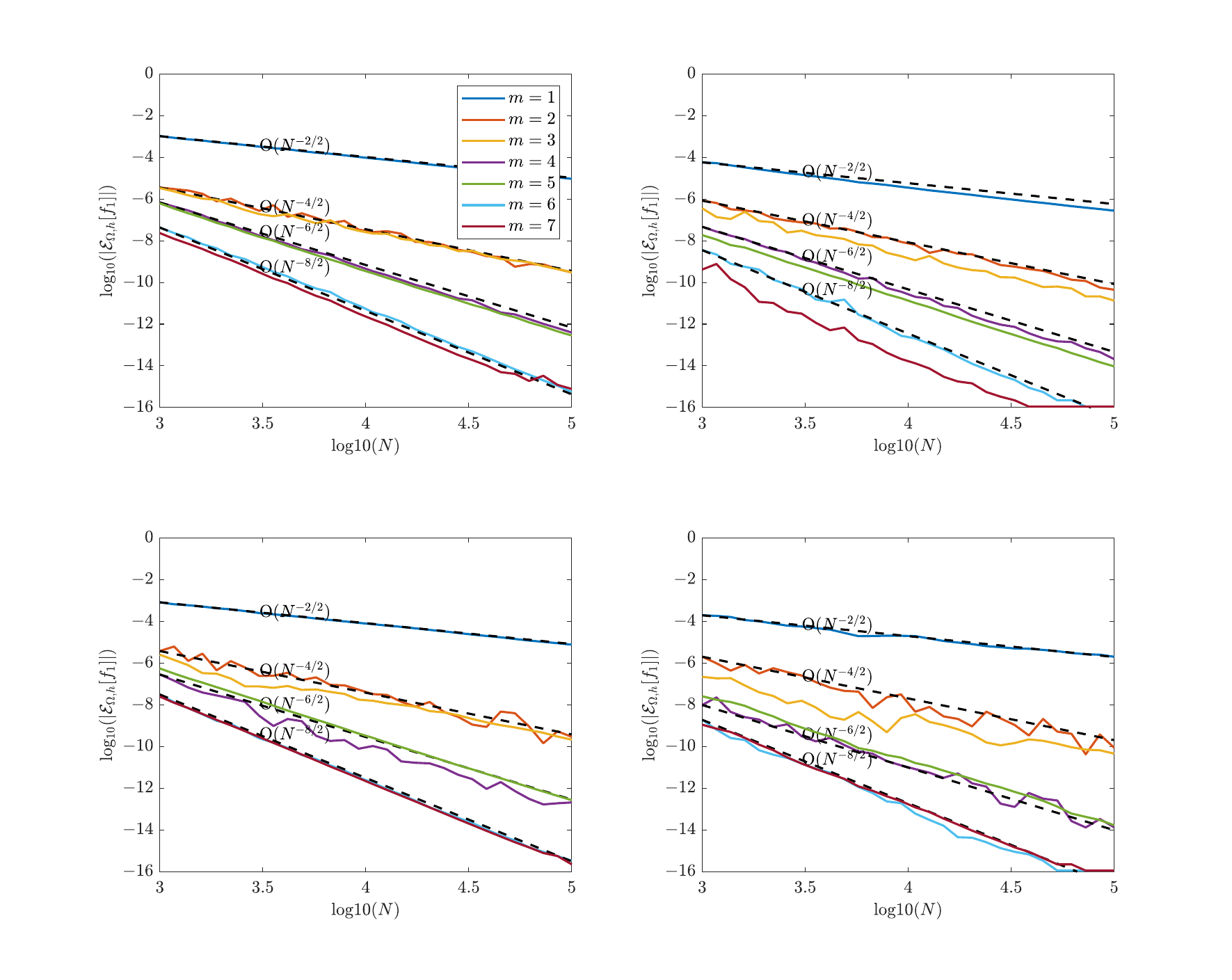}
\end{center}
\caption{Log base 10 of the average absolute error when approximating the definite integral of $f_{1}$ over $[-1/2,1/2]^{2}$ for each value of $N$.  Frames in the left column are the result of generating quadrature weights by minimizing the 2-norm squared of the weight vector $\mathbf{w}_{k}$ subject to  $P_{k}^{T}\mathbf{w}_{k}=\boldsymbol{\pi}_{k}$, while the weights used in the right column minimize \eqref{eq:RBFObjective}.  Results in the top row of frames are generated for quasi-uniform node sets as depicted in figure \ref{fig:ExampleTriangulations}.  The results of the bottom row are for pseudo-random nodes taken from the shifted Halton set in $\mathbb{R}^{2}$ (with nodes on the boundary of the domain drawn from the shifted Halton node set in $\mathbb{R}$).}
\label{fig:Errorf1}
\end{figure}

\begin{figure}
\begin{center}
\includegraphics[width=\linewidth]{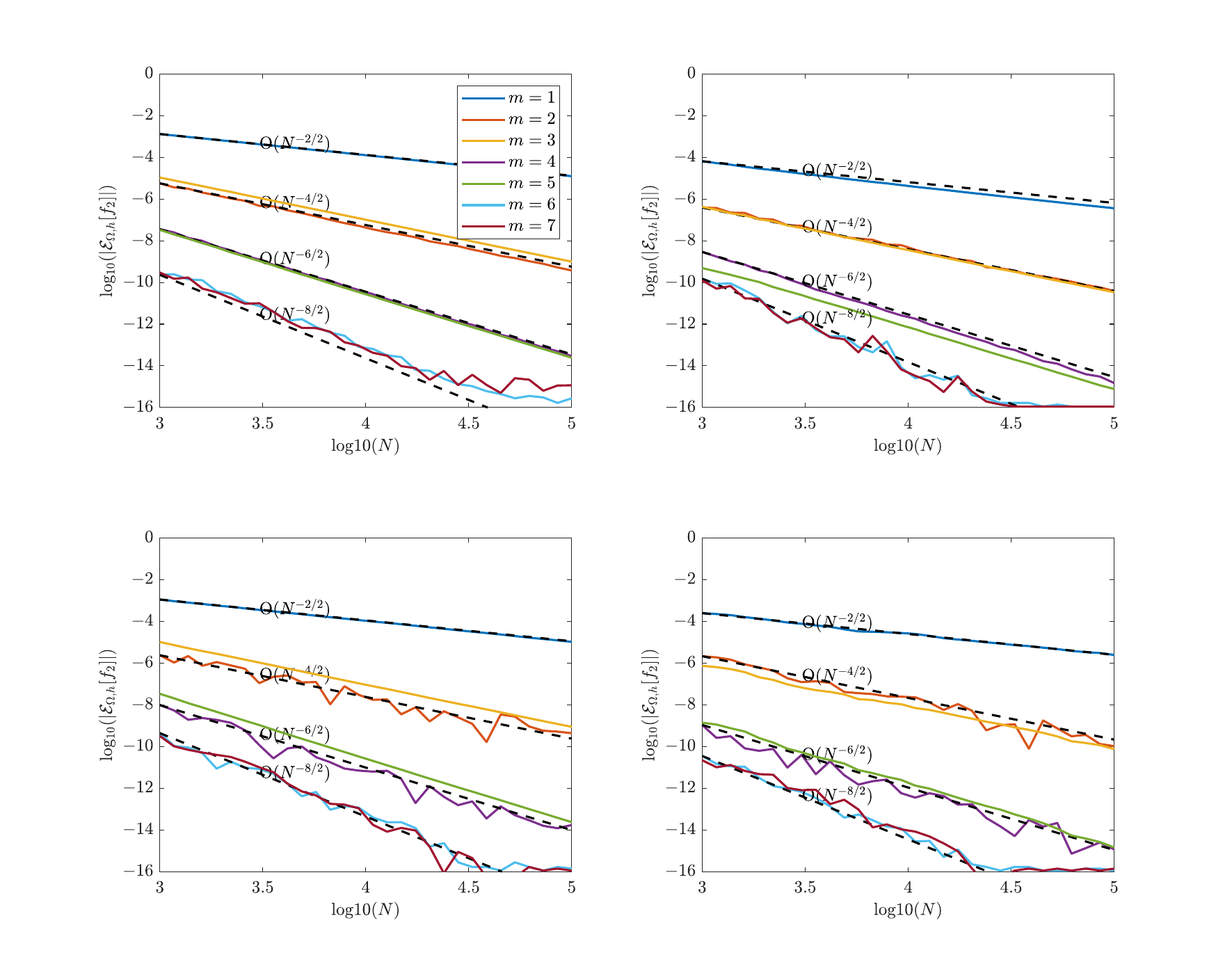}
\end{center}
\caption{Log base 10 of the average absolute error when approximating the definite integral of $f_{2}$ over $[-1/2,1/2]^{2}$ for each value of $N$.  Frames in the left column are the result of generating quadrature weights by minimizing the 2-norm squared of the weight vector $\mathbf{w}_{k}$ subject to  $P_{k}^{T}\mathbf{w}_{k}=\boldsymbol{\pi}_{k}$, while the weights used in the right column minimize \eqref{eq:RBFObjective}.  Results in the top row of frames are generated for quasi-uniform node sets as depicted in figure \ref{fig:ExampleTriangulations}.  The results of the bottom row are for pseudo-random nodes taken from the shifted Halton set in $\mathbb{R}^{2}$ (with nodes on the boundary of the domain drawn from the shifted Halton node set in $\mathbb{R}$).}
\label{fig:Errorf2}
\end{figure}

\section{Conclusions} \label{sec:Conclusions}

A superconvergence result for approximating definite integrals over polygonal domains has been shown.  This important result is intended to guide the choice of polynomial order for which a cubature rule is made exact.  Given the relationship between $n_{k}$ (the number of cubature nodes/weights) and $m$ (that is, $n_{k}\geq M_{m}$) necessary for the system of linear equations $P_{k}^{T}\mathbf{w}_{k}=\boldsymbol{\pi}_{k}$ to be guaranteed a solution when $\mathcal{X}_{k,0}$ is $\mathbb{P}_{m}^{2}$ unisolvent, it is important to keep $m$ as small as possible.  This result demonstrates that the choice of even $m$ should be a primary consideration when constructing cubature rules over polygonal domains that rely on approximating definite integrals over triangular subdomains.

\bibliographystyle{siamplain}
\bibliography{references}

\end{document}